



\documentclass[12pts]{amsart}
\usepackage{graphicx,fancybox,amssymb,color}


\vfuzz2pt 
\hfuzz2pt 


\newtheorem{theorem}{Theorem}[section]
\newtheorem{corollary}[theorem]{Corollary}


\newtheorem{lemma}[theorem]{Lemma}
\newtheorem{proposition}[theorem]{Proposition}

\theoremstyle{definition}

\theoremstyle{remark}
\newtheorem{remark}[theorem]{Remark}
\theoremstyle{remark}

\numberwithin{equation}{section}

\newcommand{\eps}{\varepsilon}




\usepackage{dsfont}
\newcommand{\RR}{\mathds{R}}


\newcommand{\cc}{\mbox{\fontfamily{phv}\selectfont C}}

\newcommand{\pp}{\mbox{\fontfamily{phv}\selectfont P}}

\newcommand{\hh}{\mbox{\fontfamily{phv}\selectfont H}}
\newcommand{\gen}{\mathbb{A}}
\newcommand{\genP}{\widetilde{\mathbb{A}}}







%
%

\date{Printed \today. (\jobname.tex)}

\begin{document}

\title{Infinitesimal generators of $q$-Meixner processes}

\date{%
Printed \today. File \jobname.tex}

\author{
W{\l}odek  Bryc
}

\address{
Department of Mathematics,
University of Cincinnati,
PO Box 210025,
Cincinnati, OH 45221--0025, USA}
\email{Wlodzimierz.Bryc@UC.edu}

\author{Jacek Weso{\l}owski}
\address{ Faculty of Mathematics and Information Science\\
Warsaw University of Technology\\ pl. Politechniki 1\\ 00-661
Warszawa, Poland}
\email{wesolo@alpha.mini.pw.edu.pl}

\keywords{Infinitesimal generators; quadratic conditional variances; harnesses, polynomial processes}
\subjclass[2000]{60J25}

\begin{center}
\end{center}
\begin{abstract}
 We  show that the weak infinitesimal generator of a class of Markov processes acts on bounded continuous functions with bounded continuous second derivative as a singular integral with  respect to the orthogonality  measure of the explicit family of polynomials.
\end{abstract}
\maketitle

\section{Introduction}
We are interested in the class of non-homogeneous Markov processes that were introduced in \cite{Bryc-Wesolowski-03} under the name $q$-Meixner processes.  The  transition probabilities $\{P_{s,t}(x,dy):\,s<t, \,x\in\RR\}$ of a $q$-Meixner process with parameters $\tau\geq 0$,  $\theta\in\RR$, and $q\in[-1,1]$ are defined as the unique orthogonality measures of the polynomials $Q_n(y|x,s,t)$ in variable $y$ which solve the three step recurrence

\begin{multline}\label{Q-rec-G}
\displaystyle y Q_n(y|x,s,t) = Q_{n+1}(y|x,s,t)   + (\theta[n]_q+xq^n)
Q_n(y|x,s,t)
\\ +(t-sq^{n-1}+\tau[n-1]_q)[n]_q Q_{n-1}(y|x,s,t),
\end{multline}
where $n\geq 1$, and $Q_{-1}(y|x,s,t)=0$, $Q_0(y|x,s,t)=1$, so $Q_1(y|x,s,t)=y-x$.

(Recall the  $q$-notation: $[n]_q=\sum_{j=0}^{n-1}q^j$. The Chapman-Kolmogorov equations hold by  \cite[Proposition 3.2]{Bryc-Wesolowski-03}.)

For $-1<q<1$ recurrence  \eqref{Q-rec-G} can be reparametrized into a recurrence for the so called Al-Salam---Chihara polynomials, so the explicit formula for  $P_{s,t}(x,dy)$ can be read out from known results, see e.g. \cite{Ismail-05,Koekoek-Swarttouw}. However, the explicit form of the transition probabilities does not play a role in our proofs, and the expressions for the transition probabilities  are rather complicated, as they may have both the discrete and the absolutely continuous parts.

Cases $q=-1$ and $q=1$ are included in \eqref{Q-rec-G}. In the first case the recursion degenerates with $P_{s,t}(x,dy)$ supported on two points. In the second case polynomials $\{Q_n(y|x,s,t):n=0,1,\dots\}$   are the  reparametrization of the Meixner polynomials, and  we get L\'evy processes in the Meixner class \cite{schoutens2000stochastic}. {Since the infinitesimal generators of L\'evy processes are well understood, in this paper we concentrate on the case $-1< q<1$, see Remark \ref{R1.2}.}

{
  Several other special cases have appeared in the literature and have been studied by other authors.
 If $q=0$, then
   the      corresponding $q$-Meixner  Markov processes  arise as the so called classical versions of  the free-Meixner L\'evy processes; that is, the time ordered moments of Borel functions coincide, see  \cite[Definition 4.1]{BKS97}).
   For more details, including connections with  \cite{Biane98}  see \cite[Appendix, Note 2 and Note 3 ]{Bryc-free-gen:2009}.
    If $\theta=\tau=0$ then %
           the corresponding $q$-Meixner  Markov processes  arises as the classical version of the
  noncommutative $q$-Brownian motion; this can be seen by comparing the transition probabilities in \cite[Theorem 4.6]{BKS97} and in \cite[Section 4.1]{Bryc-Wesolowski-03}.
 }

{
Finally we note that from \cite[Proposition 3.3]{Bryc-Wesolowski-03} it follows that $q$-Meixner processes are  examples of (nonhomogeneous)   "polynomial processes"  studied in \cite{cuchiero2012polynomial,Szablowski-2012} with explicit  "time-space harmonic polynomials"  \cite{sengupta2000time}.
}

\subsection{Infinitesimal generators}
Inhomogeneous  Markov processes with state space $\RR$ are often turned into the homogeneous   Markov processes with state space $\RR\times[0,\infty)$ by considering $\widetilde{X}_t=(X_t,t)$, see for example \cite{Wentzell:1981fk}.  We will work in non-homogeneous setting as we use one-variable polynomials in some of the proofs.

From \eqref{Q-rec-G}  it is clear that   $Q_n(y|x,s,t)$ is a polynomial in $y,x,s,t$ with the leading term $y^n$ for every $n$. It follows that  the moments  $(x,s,t)\mapsto \int y^n P_{s,t}(x,dy)$ are polynomials in variables $x,s,t$ of degree at most $n$.

{We will be interested only in the case $-1< q<1$, in which case   probability measures $\{P_{s,t}(x,dy):s<t,x\in\RR\}$ are compactly supported, see Proposition \ref{Prop-properties}(v). Since for compactly supported measures, convergence of moments   implies weak convergence, the fact that conditional moments are polynomials in  variables $x,s,t$  implies that the  transition probabilities $\{P_{s,t}(x,dy):x\in\RR, 0\leq s<t \}$ define a Feller process. That is, if $f$ is  a bounded continuous function on $\RR$ then $x\mapsto  \int f(y)P_{s,t}(x,dy)$ is a bounded continuous function.}

We will denote by
$\pp_{s,t}$ the linear operators $f\mapsto \int f(y)P_{s,t}(x,dy)$. We will consider $\pp_{s,t}$ as a  contraction on various subspaces on Banach space $C_b(\RR)$ of bounded {continuous} functions with norm $\|f\|_\infty=\sup_{x\in\RR}|f(x)|$.
 We will  use the same symbol $\pp_{s,t}$ for the linear mappings on the vector space of all polynomials in variable $y$,  defined on monomials  by $y^n\mapsto \int z^n P_{s,t}(y,dz)$.

We will work with several notions of the infinitesimal generator.

The {\em weak left  infinitesimal generator} of a non-homogeneous Markov process with transition operators $\pp_{s,t}$  is defined for $t>0$ by
\begin{equation}
  \label{LLL-}
  \gen_t^- f=\lim_{h\to 0^+} \frac{1}{h}(\pp_{t-h,t} f-f).
\end{equation}
The domain  $\mathcal{D}_t^{-}$ of the weak left generator is the set of all $f\in C_b(\RR)$ where the convergence is pointwise and  the expression
$$\left\|\frac{1}{h}(\pp_{t-h,t} f-f)\right\|_\infty$$  under the limit   is bounded, compare \cite[Chapter 1, Section 6]{dynkin1965markov} for the homogeneous case.

 The {\em weak right infinitesimal generator} of a non-homogeneous Markov process with transition operators $\pp_{s,t}$  is defined for $t\geq 0$ by
the right-generator
\begin{equation}
  \label{LLL}
  \gen^+_t f=\lim_{h\to 0^+} \frac{1}{h}(\pp_{t,t+h} f-f).
\end{equation}
The domain  $\mathcal{D}_t^{+}$ of the weak right generator is the set of all $f\in C_b(\RR)$ where the convergence is pointwise and  the expression under the limit $\|\frac{1}{h}(\pp_{t,t+h} f-f)\|_\infty$ is bounded.

We will also consider \eqref{LLL-} and \eqref{LLL} with pointwise convergence on polynomials.  Since $\pp_{s,t}$ preserves the degree of a polynomial,  it is clear that the pointwise limits \eqref{LLL-}  and \eqref{LLL} exist for any polynomial $f$,  and that both limits are polynomials of degree at most $n$ in variable $x$. Thus when $f$ is a polynomial,  the limits \eqref{LLL-}  and \eqref{LLL}   define two linear operators  $ \genP_t^{\pm}$ that map polynomials to polynomials without increasing their degrees.

Our goal is to derive the common integral representation for these infinitesimal generators. We will write the generators as singular integrals with respect to an appropriate probability measure $\nu_{x,t}(dy)$
which we determine as an orthogonality measure of appropriate orthogonal polynomials.

For $x\in\RR$ and $t>0$, let $\nu_{x,t}(dy)$  be the orthogonality measure  of the following monic polynomials in real variable $y$. With
$W_{-1}(y;x,t)=0$, $W_{0}(y;x,t)=1$, for $n\geq 0$ consider polynomials
\begin{multline}
  \label{NuQ}
  y W_{n}(y;x,t)=W_{n+1}(y;x,t)+(\theta [n+1]_q + x q^{n+1})W_n(y;x,t)\\+((1-q)t+\tau)[n]_q[n+1]_q W_{n-1}(y;x,t).
\end{multline}
By Favard's theorem, these polynomials are orthogonal, and since for  $-1\leq q<1$ the coefficients of the recurrence are bounded, their orthogonality measure is compactly supported. In  fact, the three step recursion \eqref{NuQ} can be reparametrized into a recursion for the so called Al-Salam---Chihara polynomials \cite{Ismail-05,Koekoek-Swarttouw}.
(The dependence of measure $\nu_{x,t}(dy)$ on parameters $\theta,\tau,q$ is suppressed in our notation.)

  Our main result is  the following "singular integral" expression for the generator of the $q$-Meixner processes with $|q|<1$.

\begin{theorem}
  \label{Thm:gen_q_Meixner} Fix $\theta\in\RR$, $\tau\geq 0$ and $q\in(-1,1)$.
  \begin{enumerate}
    \item Let $f:\RR\to\RR$ be a bounded continuous function with bounded continuous second derivative. Then
  $f\in\mathcal{D}_t^{-}\cap \mathcal{D}_t^{+}$, both weak infinitesimal generators  coincide on $f$ and are given by
  \begin{equation}\label{gen-q}
  \gen_t^{\pm}(f)(x)=\frac{1}{2}f''(x)\nu_{t,x}(\{x\})+\int_{\RR\setminus\{x\}}\left( \frac{\partial}{\partial x}\frac{f(y)-f(x)}{y-x}\right)\nu_{x,t}(dy).
\end{equation}

\item If $f$ is a polynomial  then both left and right infinitesimal generators  ${\genP}_t^{\pm}$ coincide on $f$, and are given by the right hand side of \eqref{gen-q}. On polynomials, the latter expression takes a slightly simpler form
  \begin{equation}\label{gen-q+}
  \genP_t^{\pm}(f)(x)=\int_\RR \left(  \frac{\partial }{\partial x}\frac{f(y)-f(x)}{y-x}\right)\nu_{x,t}(dy).
  \end{equation}
  \end{enumerate}

\end{theorem}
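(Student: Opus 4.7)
My approach is to link $\nu_{x,t}$ directly to the transition probabilities of the process so that both formulas in the theorem reduce to a single weak limit of measures. The first ingredient is an algebraic identity between the two orthogonal polynomial systems:
\begin{equation}\label{plan-key-id}
(y-x)\,W_n(y;x,t)\;=\;Q_{n+1}(y|x,t,t),\qquad n\ge 0.
\end{equation}
Indeed, evaluating \eqref{Q-rec-G} at $s=t$ gives coefficients $\theta[n]_q+xq^n$ and $((1-q)t+\tau)[n-1]_q[n]_q$ (using $1-q^{n-1}=(1-q)[n-1]_q$), and after the index shift $n\mapsto n+1$ these agree with the coefficients of \eqref{NuQ}. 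Together with the observation that $Q_1(x|x,t,t)=0$ propagates through the recurrence evaluated at $y=x$ to force $Q_n(x|x,t,t)=0$ for all $n\ge 1$, a short induction on $n$ establishes \eqref{plan-key-id}.

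The main analytic step is the weak convergence of probability measures
\begin{equation}\label{plan-wk-conv}
\mu_h^+(dy):=\tfrac{1}{h}(y-x)^2\,P_{t,t+h}(x,dy),\qquad \mu_h^-(dy):=\tfrac{1}{h}(y-x)^2\,P_{t-h,t}(x,dy)
\end{equation}
to $\nu_{x,t}(dy)$ as $h\to 0^+$; both measures have unit total mass since the conditional variance equals the time increment. Proposition \ref{Prop-properties}(v) supplies a uniformly compact support for small $h$, so weak convergence reduces to polynomial-moment convergence, and by Favard's theorem it suffices to compute
\[
\int W_j W_k\,d\mu_h^-\;=\;\tfrac{1}{h}\int Q_{j+1}(y|x,t,t)\,Q_{k+1}(y|x,t,t)\,P_{t-h,t}(x,dy).
\]
Expanding $Q_n(\cdot|x,t,t)=Q_n(\cdot|x,t-h,t)+h\,R_n+O(h^2)$, with $R_n:=\partial_s Q_n(\cdot|x,s,t)|_{s=t}$ of $y$-degree at most $n-1$, and using the orthogonality of $\{Q_n(\cdot|x,t-h,t)\}_n$ under $P_{t-h,t}(x,\cdot)$ combined with $\|Q_{j+1}(\cdot|x,t-h,t)\|_{L^2(P_{t-h,t})}^2=\prod_{\ell=1}^{j+1}B_\ell(t-h,t)=O(h)$, one sees that the cross terms contribute only $O(h^2)$, so they vanish after division by $h$. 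The main term converges to $\delta_{j,k}\prod_{\ell=2}^{j+1}B_\ell(t,t)$, which equals $\|W_j\|^2_{L^2(\nu_{x,t})}$ via the identification $B_{n+1}(t,t)=((1-q)t+\tau)[n]_q[n+1]_q$. The case of $\mu_h^+$ is handled by the same computation.

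Given \eqref{plan-wk-conv}, part (1) follows from Taylor's theorem. Because $\int(y-x)\,P_{s,t}(x,dy)=\int Q_1\,dP_{s,t}=0$, the function
\[
\phi_f(x,y)\;=\;\begin{cases}\dfrac{f(y)-f(x)-f'(x)(y-x)}{(y-x)^2},&y\ne x,\\[0.4em]\tfrac{1}{2}f''(x),&y=x,\end{cases}
\]
is continuous and bounded by $\tfrac12\|f''\|_\infty$, and $\tfrac{1}{h}(\pp_{t,t+h}f(x)-f(x))=\int\phi_f(x,y)\,\mu_h^+(dy)$, which by \eqref{plan-wk-conv} converges to $\int\phi_f(x,y)\,\nu_{x,t}(dy)$; separating the atom at $\{x\}$ turns the limit into the right-hand side of \eqref{gen-q}, while the uniform bound on $\phi_f$ places $f$ in $\mathcal{D}_t^+$. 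The analogous argument with $\mu_h^-$ handles $\gen_t^-$ and gives coincidence of the two generators. Part (2) for polynomial $f$ is the identical computation, with the atom reabsorbed into the full integral \eqref{gen-q+} because $\partial_x\{(f(y)-f(x))/(y-x)\}$ extends continuously to $y=x$ with value $\tfrac12 f''(x)$. The principal technical hurdle lies in the moment calculation of paragraph two: rigorously showing that the lower-degree corrections $R_n$ produce only $O(h^2)$ contributions under integration—which here is ensured by the $O(h)$ smallness of $\|Q_{j+1}\|^2_{L^2(P_{t-h,t})}$—so that after division by $h$ the corrections vanish and only the orthogonality-driven main term survives.
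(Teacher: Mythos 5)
Your argument is correct in outline, but it follows a genuinely different route from the paper. The paper proves part (ii) first by pure algebra: it shows both generators act on the martingale polynomials $M_n(\cdot;t)=Q_n(\cdot|0,0,t)$ as $-\partial_t M_n$, introduces the auxiliary operators $\hh_t(p)=\genP_t(yp)-x\genP_t(p)$ and $\cc_t(p)=\hh_t(yp)-x\hh_t(p)$, and identifies $\hh_t$ with the divided-difference integral against $\nu_{x,t}$ by an induction resting on the $q$-binomial identities \eqref{QQQ}--\eqref{QM} from \cite{Bryc-Wesolowski-03}; only afterwards does it deduce the weak convergence of $(y-x)^2(t-s)^{-1}P_{s,t}(x,dy)$ to $\nu_{x,t}$ (Theorem \ref{Lemma-Main}) as a consequence of the polynomial formula, via $\cc_t(f)(x)=\lim_h h^{-1}\int(y-x)^2f(y)P_{t,t+h}(x,dy)$. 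You invert this order: you prove the weak convergence directly by a perturbative moment computation, expanding $Q_n(\cdot|x,t,t)$ around the orthogonal system $Q_n(\cdot|x,t-h,t)$ and exploiting that $\|Q_{j+1}(\cdot|x,t-h,t)\|^2_{L^2(P_{t-h,t})}=h\prod_{\ell\ge 2}B_\ell(t-h,t)=O(h)$ kills the cross terms after division by $h$; both parts of the theorem then fall out of the single limit statement (your treatment of part (i) via the function $\phi_f$ and the martingale property is essentially identical to the paper's last step). Both proofs hinge on the same identity $(y-x)W_n(y;x,t)=Q_{n+1}(y|x,t,t)$. What the paper's route buys is that everything before the final step is exact algebra, with no asymptotic estimates; what yours buys is a more self-contained and unified argument that avoids the combinatorial identities \eqref{QQQ}--\eqref{QM} entirely. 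To make your version fully rigorous you would need to (a) justify that the expansion coefficients of $R_{k+1}=\partial_sQ_{k+1}|_{s=t}$ in the moving basis $\{Q_m(\cdot|x,t-h,t)\}$ stay bounded as $h\to0$ (true, since the change of basis is unitriangular with coefficients polynomial in $h$), and (b) upgrade Proposition \ref{Prop-properties}(v) to supports bounded uniformly in $h$ (true, since the recurrence coefficients of \eqref{Q-rec-G} are bounded uniformly for $s$ near $t$) — both are routine, so I see no genuine gap.
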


\begin{remark} \label{R1.2} With minimal changes in our proofs, Theorem \ref{Thm:gen_q_Meixner} holds also for $q=1$. However, for $q=1$ the $q$-Meixner processes are L\'evy processes with finite moments, and the infinitesimal generators for centered L\'evy processes with finite second moments have been studied in more detail, see e.g. \cite{Anshelevich:2011}. In this case,  the restriction of the infinitesimal generator of a square-integrable Levy process  to our class of functions is given by   \eqref{gen-q} with   $ \nu_{x,t}(dy)=(\delta_{x}*K)(dy)$ where $K(dy)$ is the measure from the Kolmogorov representation for the characteristic function of the infinitely divisible measure $P_{0,1}(0,dy)$.
Kolmogorov measures $K(dy)$ for centered Levy processes in the Meixner class are known explicitly, see   \cite{schoutens1998levy}.  {They can also be read out  as the orthogonality measures of polynomials  \eqref{NuQ} for $x=0$,  $q=1$. }

\end{remark}
\begin{remark}\label{R1.3}
Theorem \ref{Thm:gen_q_Meixner} holds also for $q=-1$ with $
\nu_{x,t}(dy)=\delta_{\theta-x}$.
 Our proof could be modified to account for the possibility that  $[n]_q=0$ when $n$ is even. However, in this case the  transition probabilities  are supported on two points:
\begin{multline*}
P_{s,t}(x,dy)=\left(\frac12+\frac{\theta-2x}{2\sqrt{(\theta-2x)^2+4(t-s)}}\right)\delta_{\frac12(\theta-\sqrt{(\theta-2x)^2+4(t-s)})}\\+\left(\frac12-\frac{\theta-2x}{2\sqrt{(\theta-2x)^2+4(t-s)}}\right)\delta_{\frac12(\theta+\sqrt{(\theta-2x)^2+4(t-s)}) } \;.
\end{multline*}
So the fact that the weak infinitesimal generator on twice differentiable functions is given by  \eqref{gen-q} with the degenerate
$
\nu_{x,t}(dy)=\delta_{\theta-x}
$ is just an exercise, and we omit proof for this case.
\end{remark}

The following technical result is an intermediate step in the proof of Theorem \ref{Thm:gen_q_Meixner}.
\begin{theorem}
  \label{Lemma-Main}
  Measures
\begin{equation}\label{ProbMeas}
\frac{(y-x)^2}{t-s}P_{s,t}(x,dy)
\end{equation}
are probability measures and converge weakly as $s\to t^-$ to $\nu_{x,t}(dy)$. Similarly, probability measures \eqref{ProbMeas} converge weakly as $t\to s^+$ to $\nu_{x,s}(dy)$.
\end{theorem}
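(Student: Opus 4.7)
My plan is to (i) verify the probability-measure claim from the norm of $Q_1$, (ii) prove the algebraic identity $Q_{n+1}(y|x,u,u)=(y-x)W_n(y;x,u)$, (iii) use it to match polynomial moments, and (iv) upgrade to weak convergence via uniformly bounded supports. The $t\to s^+$ case is completely symmetric.

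Since $Q_1(y|x,s,t)=y-x$ is orthogonal to $Q_0=1$ in $L^2(P_{s,t}(x,\cdot))$, specializing \eqref{Q-rec-G} at $n=1$ yields $\int(y-x)^2 P_{s,t}(x,dy)=\|Q_1\|^2=t-s$, so \eqref{ProbMeas} is a probability measure. For (ii) I would induct on $n\geq 0$, the base case $Q_1=(y-x)W_0$ being trivial. The inductive step matches recurrence coefficients: the upper coefficient $\theta[n+1]_q+xq^{n+1}$ in \eqref{Q-rec-G} appears verbatim in \eqref{NuQ}, and the $q$-identity $1-q^n=(1-q)[n]_q$ reduces the lower coefficient at $s=t=u$ to
\begin{equation*}
(u-uq^n+\tau[n]_q)[n+1]_q = ((1-q)u+\tau)\,[n]_q[n+1]_q,
\end{equation*}
which is the lower coefficient in \eqref{NuQ}. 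Substituting the inductive hypotheses for $Q_n$ and $Q_{n-1}$ into \eqref{Q-rec-G} at $s=t=u$, factoring out $y-x$, and comparing with \eqref{NuQ} gives the identity at level $n+1$; the $n=0$ step needs no hypothesis on $Q_{-1}$ because $b_1|_{s=t=u}=0$.

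For (iii), given any polynomial $p(y)$, triangular back-substitution against the monic basis $\{Q_j(\cdot|x,s,t)\}$ produces an expansion
\begin{equation*}
(y-x)\,p(y) = \sum_{j\geq 0}\gamma_j(x,s,t)\,Q_j(y|x,s,t),
\end{equation*}
with each $\gamma_j$ polynomial in $(x,s,t)$. Using $Q_1=y-x$ and orthogonality of $\{Q_j\}$,
\begin{equation*}
\int p(y)\,\mu_{x,s,t}(dy) = \frac{1}{t-s}\int Q_1(y|x,s,t)(y-x)p(y)\,P_{s,t}(x,dy) = \gamma_1(x,s,t),
\end{equation*}
a polynomial in $(x,s,t)$. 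Setting $s=t$ and invoking the identity from (ii) rewrites the expansion as $(y-x)p(y)=\gamma_0(x,t,t)+(y-x)\sum_{m\geq 0}\gamma_{m+1}(x,t,t)W_m(y;x,t)$; evaluating at $y=x$ forces $\gamma_0(x,t,t)=0$, and dividing by $y-x$ gives $p(y)=\sum_{m\geq 0}\gamma_{m+1}(x,t,t)W_m(y;x,t)$. Orthogonality of $\{W_m\}$ with respect to $\nu_{x,t}$ with $W_0\equiv 1$ identifies the $W_0$-coefficient as $\int p\,d\nu_{x,t}$, hence $\gamma_1(x,t,t)=\int p\,d\nu_{x,t}$.

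Finally, since $|q|<1$, the Jacobi coefficients in \eqref{Q-rec-G} and \eqref{NuQ} are uniformly bounded for $(x,s,t)$ in any fixed compact set, so the supports of all the measures $\mu_{x,s,t}$ and $\nu_{x,t}$ lie in a common compact interval; by Stone--Weierstrass, polynomial moment convergence on such a family yields weak convergence. The principal obstacle is the identity in (ii), specifically the matching of $b_{n+1}|_{s=t}$ with $B_n$ via $1-q^n=(1-q)[n]_q$ -- this is precisely what pins the limit to $\nu_{x,t}$ rather than some other candidate measure.
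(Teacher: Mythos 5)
Your proof is correct, but it takes a genuinely different route from the paper's. The paper deduces this theorem from Theorem \ref{Thm:gen_q_Meixner}(ii), which it has already proved: it forms the operator $\cc_t(p)(x)=\hh_t(yp(y))(x)-x\hh_t(p(y))(x)$, evaluates it in two ways --- directly from the definition of the generator as $\lim_{h\to 0^+}\frac1h\int (y-x)^2f(y)\,P_{t,t+h}(x,dy)$, and as $\int f\,d\nu_{x,t}$ via Lemma \ref{Lem-C} --- and then invokes moment-determinacy of the compactly supported $\nu_{x,t}$. That chain ultimately rests on the martingale polynomials $M_n$ and the $q$-binomial identities \eqref{QQQ}--\eqref{QM} imported from \cite{Bryc-Wesolowski-03} inside Lemma \ref{Lemma_HfmNu}. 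You instead compute the moments of \eqref{ProbMeas} directly: expanding $(y-x)p(y)$ in the orthogonal basis $\{Q_j(\cdot|x,s,t)\}$ identifies the moment with the coefficient $\gamma_1(x,s,t)$, which is a polynomial in $(x,s,t)$ and hence continuous at $s=t$, and the factorization $Q_{n+1}(y|x,t,t)=(y-x)W_n(y;x,t)$ --- which the paper also uses, but only as an ingredient inside the proof of Lemma \ref{Lemma_HfmNu} --- pins down $\gamma_1(x,t,t)=\int p\,d\nu_{x,t}$. Your coefficient check $(u-uq^n+\tau[n]_q)[n+1]_q=((1-q)u+\tau)[n]_q[n+1]_q$ is exactly the computation that makes \eqref{NuQ} the specialization of \eqref{Q-rec-G} at $s=t$, and your passage from moment convergence to weak convergence (uniformly bounded supports plus Stone--Weierstrass) is if anything more explicit than the paper's appeal to moment-determinacy. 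What your approach buys is a self-contained, elementary proof independent of the generator machinery; what the paper's ordering buys is economy, since $\hh_t$, $\cc_t$ and Lemma \ref{Lemma_HfmNu} are needed anyway for Theorem \ref{Thm:gen_q_Meixner}(ii), so the present theorem falls out as a short corollary.
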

The proofs are in Section \ref{Sect:Thm(ii)}  and in Section \ref{Sect:proofII}. The plan of proof is as follows. We first prove Theorem \ref{Thm:gen_q_Meixner}(ii). We then  derive Theorem \ref{Lemma-Main} from Theorem \ref{Thm:gen_q_Meixner}(ii). Finally, we  show that Theorem \ref{Lemma-Main} implies Theorem \ref{Thm:gen_q_Meixner}(i).

We end this section with a short list of more explicit examples.
\subsection{Some special cases}\label{S1.2}  Measures $\nu_{x,t}(dy)$ take more explicit form in some special cases. In the corollaries, $\gen_t$ denotes either the  left or the  right weak infinitesimal
generator if it is applied to bounded continuous functions,  or one of the generators $\tilde{\gen}_t^\pm$ if it is acting on polynomials.

The generator of the $q$-Brownian process was determined \cite[Section 5, Theorem 23]{Anshelevich:2011}; his result  inspired our study of generators for more general $q$-Meixner processes.
\begin{corollary} \label{ThAnsh}
The infinitesimal generator of the $q$-Wiener process acts  on a polynomial $f$  or on a bounded continuous function $f$ with bounded continuous second derivative  as follows:
  \begin{equation}
    \label{gen-qBrown}
    (\gen_t f)(x)= \int \left(\frac{\partial  }{\partial x }\frac{f(  y)-f(x)}{ y -x}\right) P_{q^2t,t}(qx,dy).
  \end{equation}
Here $P_{s,t}(x,dy)$ denotes the transition probability measure of the $q$-Wiener process.
\end{corollary}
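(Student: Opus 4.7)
The plan is to specialize Theorem~\ref{Thm:gen_q_Meixner} to the $q$-Wiener case (which, as noted after \eqref{Q-rec-G}, corresponds to $\theta=0$ and $\tau=0$) and to identify the auxiliary orthogonality measure $\nu_{x,t}(dy)$ with a one-step transition kernel of the process itself, namely $P_{q^2 t, t}(qx, dy)$. Once this identification is established, \eqref{gen-qBrown} will follow as an immediate rewriting of Theorem~\ref{Thm:gen_q_Meixner}.

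The key step is to compare two three-term recurrences. First, I would substitute $\theta = 0$, $\tau = 0$ in \eqref{NuQ} to obtain
\begin{equation*}
y W_n(y;x,t) = W_{n+1}(y;x,t) + x q^{n+1} W_n(y;x,t) + (1-q)t\,[n]_q[n+1]_q W_{n-1}(y;x,t).
\end{equation*}
Second, I would substitute $\theta = 0$, $\tau = 0$, $s = q^2 t$, and replace $x$ by $qx$ in \eqref{Q-rec-G}, which yields
\begin{equation*}
y Q_n(y|qx,q^2 t, t) = Q_{n+1} + x q^{n+1} Q_n + (t - q^{n+1} t)[n]_q Q_{n-1}.
\end{equation*}
Using the identity $1-q^{n+1} = (1-q)[n+1]_q$, the trailing coefficient equals $(1-q)t\,[n]_q[n+1]_q$, and the two monic three-term recurrences coincide. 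Favard's theorem together with the uniqueness of the orthogonality measure for compactly supported systems then gives
\begin{equation*}
\nu_{x,t}(dy) = P_{q^2 t, t}(qx, dy).
\end{equation*}

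Finally, I would plug this identification into Theorem~\ref{Thm:gen_q_Meixner}. For a polynomial $f$, part (ii) is exactly \eqref{gen-qBrown}. For bounded continuous $f$ with bounded continuous second derivative, part (i) produces the same expression split into an atom contribution $\tfrac{1}{2}f''(x)\nu_{x,t}(\{x\})$ and an integral over $\RR\setminus\{x\}$. Rewriting
\begin{equation*}
\frac{\partial}{\partial x}\frac{f(y)-f(x)}{y-x} = \frac{f(y)-f(x)-f'(x)(y-x)}{(y-x)^2},
\end{equation*}
Taylor's theorem shows that this quantity is bounded on $\RR$ and extends continuously to $\tfrac{1}{2}f''(x)$ at $y = x$; hence the atom term and the integral over $\RR\setminus\{x\}$ merge into the single integral appearing on the right-hand side of \eqref{gen-qBrown}. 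The only substantive step is the algebraic comparison of the two recurrences, and since Theorem~\ref{Thm:gen_q_Meixner} and Favard's theorem handle the rest, I do not expect any real obstacle.
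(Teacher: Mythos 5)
Your proposal is correct and follows essentially the same route as the paper: both set $\theta=\tau=0$, match the recurrence \eqref{NuQ} against \eqref{Q-rec-G} with $s=q^2t$ and $x$ replaced by $qx$ (via $1-q^{n+1}=(1-q)[n+1]_q$) to conclude $\nu_{x,t}(dy)=P_{q^2t,t}(qx,dy)$, and then invoke Theorem \ref{Thm:gen_q_Meixner}. Your extra remark about merging the atom term with the integral over $\RR\setminus\{x\}$ is a harmless elaboration of what the paper leaves implicit.
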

\begin{proof}
From \eqref{Q-rec-G} with  $\theta=\tau=0$, we read out that the transition probabilities $P_{s,t}(x,dy)$ of the $q$-Brownian motion are the orthogonality measures of the monic polynomials $\{Q_n(y|x,s,t)n\geq 0\}$ in variable $y$ which are given by the three step recurrence
\begin{multline}\label{Q-rec-G-q-W}
  yQ_n(y|x,s,t) = Q_{n+1}(y|x,s,t) + x q^nQ_n(y|x,s,t)\\+(t-sq^{n-1})[n]_q Q_{n-1}(y|x,s,t), \;  n\geq 1,
\end{multline}
with $Q_{-1}(y|x,s,t)=0$, $Q_0(y|x,s,t)=1$. Comparing  \eqref{NuQ} with  $\theta=\tau=0$ and  \eqref{Q-rec-G-q-W} with  $s=q^2t$ and $x$ replaced by $qx$ we see that   $\nu_{x,t}(dy)=P_{q^2t,t}(qx,dy)$. So  \eqref{gen-qBrown} follows from \eqref{gen-q}.
\end{proof}

The free Brownian motion corresponds to  $q=0$ and has been studied in \cite[page 392]{Biane:1998b}. The domain of the closely related generator for the free Ornstein-Uhlenbeck process  is described in more detail in \cite[page 150]{BKS97}.
\begin{corollary}\label{Th-BS}
 For $q=0$, the infinitesimal generator of the $q$-Wiener process acts  on a polynomial $f$  or on a bounded continuous function $f$ with bounded continuous second derivative   as follows:
  \begin{equation}
    \label{free-brown-gen}
   ( \gen_t f)(x)= \int_{(-2,2)} \left(\frac{\partial}{\partial x}\frac{f(\sqrt{t} y)-f(x)}{\sqrt{t}y -x}\right)\sqrt{4-y^2}dy/\pi.
  \end{equation}
\end{corollary}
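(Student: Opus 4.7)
\emph{Proof proposal.} My strategy is to apply Theorem~\ref{Thm:gen_q_Meixner} with $q=\theta=\tau=0$ and to identify the resulting measure $\nu_{x,t}(dy)$ explicitly as a rescaled Wigner semicircle law; the claim \eqref{free-brown-gen} will then follow by a change of variables.

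First, I substitute $q=\theta=\tau=0$ into the recurrence \eqref{NuQ}. Using $q^{n+1}=0$ and $[n]_0 = \mathbf{1}_{n\ge 1}$, every coefficient involving $x$ or the parameters $\theta,\tau$ drops out, and the recurrence collapses to
\begin{equation*}
y\, W_n(y;x,t) = W_{n+1}(y;x,t) + t\, W_{n-1}(y;x,t),\qquad n\ge 1,
\end{equation*}
with $W_0=1$, $W_{-1}=0$. Note that the recursion is independent of $x$, so $\nu_{x,t}$ does not depend on $x$.

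Next, the rescaled polynomials $\widetilde U_n(z):=t^{-n/2}W_n(\sqrt{t}\,z;x,t)$ satisfy the monic Chebyshev recurrence $z\widetilde U_n = \widetilde U_{n+1}+\widetilde U_{n-1}$, whose orthogonality probability measure is Wigner's semicircle on $[-2,2]$ with density proportional to $\sqrt{4-z^2}$. Pushing this measure forward through $z\mapsto\sqrt{t}\,z$ gives the explicit form of $\nu_{x,t}(dy)$ on $[-2\sqrt{t}, 2\sqrt{t}]$ with density proportional to $\sqrt{4t-y^2}$.

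Finally, I plug this $\nu_{x,t}$ into \eqref{gen-q} (or into \eqref{gen-q+} when $f$ is a polynomial). Since $\nu_{x,t}$ is absolutely continuous, $\nu_{x,t}(\{x\})=0$, so the $f''(x)$ atomic term disappears; substituting $y=\sqrt{t}\,z$ in the remaining integral and collecting factors of $\sqrt{t}$ then yields \eqref{free-brown-gen}. There is essentially no obstacle: the corollary is a direct specialization of Theorem~\ref{Thm:gen_q_Meixner}, and the only ingredient beyond that theorem is the classical identification of the orthogonality measure of the monic Chebyshev polynomials of the second kind with the semicircle law, together with a routine scaling calculation to pin down the normalization constant.
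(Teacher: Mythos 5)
Your argument is correct and reaches the conclusion by essentially the same overall strategy as the paper: specialize Theorem \ref{Thm:gen_q_Meixner}, identify $\nu_{x,t}$ as the centered semicircle law of variance $t$ (independent of $x$), and rescale by $\sqrt{t}$. The one genuine difference is in how the measure is identified. The paper routes this corollary through Corollary \ref{ThAnsh}: it first matches the recurrence \eqref{NuQ} with $\theta=\tau=0$ against the transition recurrence \eqref{Q-rec-G-q-W} to conclude $\nu_{x,t}(dy)=P_{q^2t,t}(qx,dy)$, and then at $q=0$ invokes the known fact that $P_{0,t}(0,dy)$ is the semicircle law of mean $0$ and variance $t$. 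You instead set $q=\theta=\tau=0$ directly in \eqref{NuQ}, obtain $yW_n=W_{n+1}+tW_{n-1}$, and recognize the (rescaled) monic Chebyshev polynomials of the second kind; this is self-contained, avoids any appeal to the marginal law of the free Brownian motion, and is in fact the same computation the paper performs for its general $q=0$ Meixner corollary, of which the present statement is the case $\theta=\tau=0$. Both identifications are correct and comparably short. One caution on the step you dismiss as routine: carrying out the normalization gives density $\sqrt{4-z^2}/(2\pi)$ for the unit-variance semicircle law on $(-2,2)$, not $\sqrt{4-z^2}/\pi$; the constant $1/\pi$ in \eqref{free-brown-gen} (and in the paper's own proof) integrates to $2$ and appears to be a typo, so do not let matching the displayed constant lead you to the wrong normalization.
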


\begin{proof}
  With $q=0$, this follows formula \eqref{gen-qBrown}: $P_{0,t}(0,dy)$ is the univariate law of the free Brownian motion $X_t$  started at $X_0=0$, which is know to be the semicircle law of mean 0 and variance $t$. Then $X_t/\sqrt{t}$ has the semicircle law of variance 1, so by a change of variable
  $$\int f(y) P_{0,t}(0,dy)= \int f(\sqrt{t}y)\sqrt{4-y^2}dy/\pi$$ for any (say polynomial) $f$.

\end{proof}

The $q$-Meixner processes with $q=0$ arise as classical versions of certain free L\'evy processes. The
 generator of such  processes was studied in \cite{Bryc-free-gen:2009}. Anshelevich \cite[Theorem 15]{Anshelevich:2011} determined the strong infinitesimal  generators for the more general class of Markov processes that arise from arbitrary  free L\'evy processes, and identified a large subset of their domain.

\begin{corollary}%
The generator
of $q$-Meixner process for $q = 0$
acts on a polynomial $f$
 or on  a bounded continuous function $f$ with bounded continuous second derivative as follows:
\begin{equation}\label{generator}
(\gen_t f)(x)=\int_{(\theta-2\sqrt{t+\tau},\theta+2\sqrt{t+\tau})}\left( \frac{\partial}{\partial x} \frac{f(y)-f(x)}{y-x} \right)w_{\theta,t+\tau}(dy),
\end{equation}
where $w_{m,\sigma^2}(y)\sim \sqrt{4 \sigma^2-(y-m)^2}$ is the semicircle density of mean $m$ and variance $\sigma$.
\end{corollary}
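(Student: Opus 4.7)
The plan is to reduce this directly to Theorem~\ref{Thm:gen_q_Meixner} by identifying the measure $\nu_{x,t}(dy)$ explicitly when $q=0$. First, I would specialize the recurrence \eqref{NuQ} to $q=0$. Using $[0]_q=0$, $[n]_q=1$ for $n\geq 1$, and $q^{n+1}=0$ for all $n\geq 0$, the recurrence collapses to the constant-coefficient three-step recurrence
\begin{equation*}
y W_n(y;x,t)=W_{n+1}(y;x,t)+\theta\,W_n(y;x,t)+(t+\tau)\,W_{n-1}(y;x,t),\quad n\geq 1,
\end{equation*}
with $W_0=1$ and $W_1(y;x,t)=y-\theta$. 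In particular the recurrence does not depend on $x$, which is a pleasant side-effect of $q=0$.

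Next, I would recognise this as the monic three-term recurrence for the semicircle law. A direct check (via the affine rescaling $y\mapsto (y-m)/\sigma$ applied to the standard semicircle on $(-2,2)$ whose monic orthogonal polynomials satisfy $zP_n=P_{n+1}+P_{n-1}$) shows that the semicircle law with mean $m$ and variance $\sigma^2$ has monic orthogonal polynomials satisfying
\begin{equation*}
x Q_n = Q_{n+1}+m\,Q_n+\sigma^2 Q_{n-1},\qquad n\geq 1.
\end{equation*}
Matching coefficients gives $m=\theta$ and $\sigma^2=t+\tau$, so by Favard's theorem $\nu_{x,t}(dy)=w_{\theta,t+\tau}(dy)$, supported on $(\theta-2\sqrt{t+\tau},\theta+2\sqrt{t+\tau})$.

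Finally, I would plug this into Theorem~\ref{Thm:gen_q_Meixner}. Since the semicircle law is absolutely continuous, $\nu_{x,t}(\{x\})=0$, which kills the $\tfrac12 f''(x)\nu_{x,t}(\{x\})$ term in \eqref{gen-q}, and the set $\RR\setminus\{x\}$ in the integral may be replaced by the full support without change. This gives exactly the stated formula \eqref{generator}, valid both on polynomials (via \eqref{gen-q+}) and on bounded continuous functions with bounded continuous second derivative (via \eqref{gen-q}). There is no genuine obstacle here: the content of the corollary lies entirely in identifying the constant-coefficient recurrence as the semicircle recurrence, so the proof is essentially a one-line verification once one specializes \eqref{NuQ}.
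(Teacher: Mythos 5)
Your proof is correct and takes essentially the same route as the paper's: specialize \eqref{NuQ} to $q=0$ to obtain the constant-coefficient recurrence $yW_n=W_{n+1}+\theta W_n+(t+\tau)W_{n-1}$, recognize it as the monic three-term recurrence of the semicircle law with mean $\theta$ and variance $t+\tau$, and substitute $\nu_{x,t}=w_{\theta,t+\tau}$ into Theorem \ref{Thm:gen_q_Meixner}. The paper's proof is just a terser version of the same computation; your extra remarks (the $x$-independence of the recurrence and the vanishing of the atom term because the semicircle law is absolutely continuous) are correct and harmless.
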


\begin{proof}
For $q=0$, recurrence  \eqref{NuQ} becomes
\begin{eqnarray*}
 W_{1}(y;x;t)&=&y-\theta ,\\
 (y-\theta) W_{n}(y;x,t)&=&W_{n+1}(y;x;t)+ ( t+\tau)  W_{n-1}(y;x,t), \mbox{ $n\geq 1$}.
\end{eqnarray*}

 The corresponding probability measure $\nu$ is the semicircle law of mean $\theta$ and variance $t+\tau$.

\end{proof}

\subsection{Some additional  observations}
Here we collect some simple "regularity" properties of the transition operators for the $q$-Meixner Markov processes.
\begin{proposition}\label{Prop-properties}
Suppose $(X_t)$ is a $q$-Meixner Markov process with transition probabilities $P_{s,t}(x,dy)$ that are orthogonality measures of polynomials \eqref{Q-rec-G}. Then the following properties hold.
\begin{enumerate}
\item Process $(X_t)$ is a martingale: $$\int y P_{s,t}(x,dy)=x.$$
\item More generally, $M_n(y;t):=Q_n(y|0,0,t)$ are martingale polynomials: $$\int M_n(y;t)P_{s,t}(x,dy)=M_n(x;s).$$
\item For $s<t$, and fixed $x\in\RR$, the positive measure  \eqref{ProbMeas}
 is a probability measure.
\item For  $s<t$,  fixed $x\in\RR$ and $U=(x-\delta,x+\delta)$ with $\delta>0$, we have
$$P_{s,t}(x,U)\geq 1-(t-s)/\delta^2.$$
\item For fixed $s<t$, $x\in\RR$ and $-1\leq q<1$, probability measure  $P_{s,t}(x,dy)$ has compact support
{
\item Transition probabilities  $P_{s,t}(x,dy)$ have Feller property:  if $f:\RR\to\RR$ is a bounded continuous function and $s<t$ then $g(x):= \int f(y)P_{s,t}(x,dy)$ defines a bounded continuous function $g:\RR\to\RR$. Furthermore, if $\lim_{x\to\pm \infty}f(x)=0$ then  $\lim_{x\to\pm \infty}g(x)=0$
}
\end{enumerate}
\end{proposition}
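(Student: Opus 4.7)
The plan is to read off parts (i), (iii), (iv) directly from the recurrence \eqref{Q-rec-G}, obtain (v) from Favard--Blumenthal, derive (ii) by expanding $M_n(y;t)$ in the $\{Q_k(y|x,s,t)\}$ basis, and deduce the Feller property (vi) from moment convergence together with the Chebyshev bound (iv).

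For (i), setting $n=0$ in \eqref{Q-rec-G} gives $Q_1(y|x,s,t)=y-x$, and its orthogonality against $Q_0=1$ under $P_{s,t}(x,dy)$ immediately yields $\int(y-x)\,P_{s,t}(x,dy)=0$. For (iii), the standard identity $\int Q_n^2\,dP_{s,t}(x,\cdot)=\prod_{k=1}^n b_k$ for monic orthogonal polynomials, read off the recurrence, specialises at $n=1$ to $\int(y-x)^2\,P_{s,t}(x,dy)=b_1=(t-s+\tau[0]_q)[1]_q=t-s$, proving (iii); then (iv) is Chebyshev's inequality applied to the function $(y-x)^2$. For (v), note that for $-1\le q<1$ the $q$-integers $[n]_q$ are bounded (by $1/(1-q)$ when $-1<q<1$ and by $1$ when $q=-1$), so the coefficients $\theta[n]_q+xq^n$ and $(t-sq^{n-1}+\tau[n-1]_q)[n]_q$ of \eqref{Q-rec-G} are bounded in $n$; Blumenthal's criterion then gives compact support of the orthogonality measure.

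For (ii), expand $M_n(y;t)=\sum_{k=0}^n c_{n,k}(x,s,t)\,Q_k(y|x,s,t)$ in the $P_{s,t}(x,\cdot)$-orthogonal basis. Integrating against $P_{s,t}(x,dy)$ and using $\int Q_k\,dP_{s,t}(x,\cdot)=0$ for $k\ge 1$ leaves $\int M_n(y;t)\,P_{s,t}(x,dy)=c_{n,0}(x,s,t)$. An induction on $n$ that compares the two three-term recurrences --- \eqref{Q-rec-G} for $Q_n$ and its $(x,s)=(0,0)$ specialization for $M_n$ --- identifies $c_{n,0}(x,s,t)=M_n(x;s)$; this is precisely the time--space harmonic polynomial assertion of \cite[Proposition 3.3]{Bryc-Wesolowski-03}.

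For (vi), the moments $x\mapsto\int y^k\,P_{s,t}(x,dy)$ are polynomials in $x$ of degree $\le k$ (from iterating \eqref{Q-rec-G}), hence continuous. Along any sequence $x_n\to x$, the bound (iv) together with the fact that the recurrence coefficients in \eqref{Q-rec-G} are uniformly bounded in $n$ for $x$ in a compact set gives both tightness of $\{P_{s,t}(x_n,\cdot)\}$ and a common compact support; since compactly supported measures are determined by their moments, moment convergence upgrades to weak convergence $P_{s,t}(x_n,\cdot)\toD P_{s,t}(x,\cdot)$, proving continuity of $g(x)=\int f\,dP_{s,t}(x,\cdot)$. For the vanishing at infinity, given $\eps>0$ pick $R$ with $|f|<\eps$ outside $[-R,R]$; for $|x|>2R$, Chebyshev (iv) with $\delta=|x|-R$ gives $P_{s,t}(x,[-R,R])\le(t-s)/(|x|-R)^2$, so $|g(x)|\le\|f\|_\infty(t-s)/(|x|-R)^2+\eps$, and letting $|x|\to\infty$ then $\eps\to 0$ completes the proof. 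The one nontrivial technical point is the bookkeeping in (ii); all other parts follow quickly from (iv) and classical orthogonal polynomial facts.
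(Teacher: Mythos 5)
Your proposal is correct and follows essentially the same route as the paper: orthogonality of $Q_1$ for (i), the second-moment identity $\int(y-x)^2P_{s,t}(x,dy)=t-s$ for (iii) (you read it off the norm formula $\|Q_1\|^2=b_1$, the paper reads it off orthogonality of $Q_2$ to $Q_0$ --- the same fact), Chebyshev for (iv) and for the vanishing-at-infinity part of (vi), bounded recurrence coefficients for (v), polynomiality of moments plus determinacy of compactly supported measures for continuity in (vi), and for (ii) ultimately the same citation to \cite[Proposition 3.3]{Bryc-Wesolowski-03} that the paper uses. No gaps worth flagging.
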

\begin{proof}
\begin{enumerate}
\item The martingale property follows from the fact that $Q_1(y|x,s,t)=y-x$ is orthogonal to $Q_0=1$ with respect to the measure $P_{s,t}(x,dy)$.
\item The more general martingale property is \cite[Proposition 3.3]{Bryc-Wesolowski-03} (the polynomials $M_n(y;t)$ are not orthogonal unless $X_0=0$).

\item Clearly, this is a positive measure so we only need to verify that it integrates to $1$. Since $$Q_2(y|x,s,t)=(y-x)^2-(y-x) (\theta +(q-1) x) -(t-s)$$ is orthogonal to $Q_0=1$, we get constant  conditional variance  $$\int(y-x)^2P_{s,t}(x,dy)=t-s.$$  This shows that  \eqref{ProbMeas} is a probability measure.

\item By Chebyshev's inequality,  $$P_{s,t}(x,U')\leq \frac{1}{\delta^2}\int(y-x)^2 P_{s,t}(x,dy)=\frac{(t-s)}{\delta^2}.$$

\item Compact support follows from the fact that for $-1\leq q<1$ the coefficients of recurrence \eqref{Q-rec-G} are bounded, see e.g. \cite[Theorem 2.5.4]{Ismail-05}.

{
\item Clearly, for any bounded measurable $f$ we have $\|g\|_\infty\leq \|f\|_\infty$. As explained in the introduction, the fact that $g(x)$ is continuous follows from the fact that conditional moments  $\int y^n P_{s,t}(x,dy)$ are polynomials in variable $x$. To show that the transition operators preserve vanishing at infinity property, given $\eps>0$ choose $A>0$ such that  $\|f\|_\infty (t-s)/A^2<\eps$ and  $\sup_{y>A}|f(y)|<\eps$. Then
for $x>2A$ we have
$$
\int f(y)P_{s,t}(x,dy)=\int_{y>A} f(y)P_{s,t}(x,dy)+\int_{y\leq A}f(y)P_{s,t}(x,dy)
$$
and the first term is bounded by
$$\left|\int_{y>A} f(y)P_{s,t}(x,dy)\right|\leq  \sup_{y>A}|f(y)|<\eps.$$
Since $ \{y: y\le A\}\subset \{y:|y-x|\ge A\}$
for  $x>2A$,  the second term is bounded by Chebyshev's inequality that was already used in the proof of (iv):
\begin{multline*}
\left|\int_{y\leq A}f(y)P_{s,t}(x,dy)\right|\leq \| f\|_\infty P_{s,t}(x,\{y:  |y-x|\geq A\})
\\ \leq \|f\|_\infty (t-s)/A^2<\eps
.
\end{multline*}
The proof for the case $x\to -\infty$ is similar.
}
\end{enumerate}

\end{proof}

\section{Proof of Theorem \ref{Thm:gen_q_Meixner}(ii)} \label{Sect:Thm(ii)}
We begin by checking that both left and right infinitesimal generators $\genP_t^{\pm}$ coincide on the   polynomials. %
\begin{lemma}\label{Lem-M}
Suppose $M_n(y;t):=Q_n(y|0,0,t)$ are the martingale polynomials from Proposition \ref{Prop-properties}(ii). Then
\begin{equation}\label{DM}
{\genP}_t^{\pm}(M_n(\cdot;t))(x)=-\frac{\partial}{\partial t}M_n(x;t).
\end{equation}
In particular,  for any polynomial $p$ the left and right infinitesimal generators $\genP_t^\pm ({p})$ exist and are equal.   \end{lemma}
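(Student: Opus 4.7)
The plan is to use the martingale identity $\pp_{s,t}[M_n(\cdot;t)](x) = M_n(x;s)$ from Proposition \ref{Prop-properties}(ii) as the principal tool, together with the fact that $\pp_{s,t}$ preserves polynomial degrees and that conditional moments $\int y^k P_{s,t}(x,dy)$ are polynomials in $(x,s,t)$, hence continuous in $s,t$.

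First I would handle the left generator. Setting $s = t-h$ in the martingale identity gives $\pp_{t-h,t}[M_n(\cdot;t)](x) = M_n(x;t-h)$, so
\begin{equation*}
\frac{1}{h}\bigl(\pp_{t-h,t} M_n(\cdot;t)(x) - M_n(x;t)\bigr) = \frac{M_n(x;t-h) - M_n(x;t)}{h} \longrightarrow -\frac{\partial}{\partial t} M_n(x;t),
\end{equation*}
since $M_n(x;t)$ is a polynomial in $t$. For the right generator the martingale identity cannot be invoked directly because $\pp_{t,t+h}$ acts on $M_n(\cdot;t)$ rather than on $M_n(\cdot;t+h)$. The algebraic trick I would use is to write $M_n(\cdot;t) = M_n(\cdot;t+h) + R_h$ with $R_h(y) := M_n(y;t) - M_n(y;t+h)$, apply $\pp_{t,t+h}$, and invoke the martingale identity on the first summand. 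This yields
\begin{equation*}
\frac{1}{h}\bigl(\pp_{t,t+h} M_n(\cdot;t)(x) - M_n(x;t)\bigr) = \pp_{t,t+h}\!\left[\frac{R_h}{h}\right](x).
\end{equation*}
Now $R_h/h$ is a polynomial in $y$ of degree at most $n-1$ (the leading $y^n$ term of $M_n(y;t)$ is $t$-independent), whose coefficients converge to those of $-\partial_t M_n(\cdot;t)$ as $h\to 0^+$. Since $\pp_{t,t+h}$ maps each monomial $y^k$ to a moment that is polynomial (hence continuous) in $(x,h)$ and tends to $x^k$ as $h\to 0^+$, one may pass to the limit coefficient by coefficient and conclude $\genP_t^+ M_n(\cdot;t)(x) = -\partial_t M_n(x;t)$.

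For an arbitrary polynomial $p$, I would expand it in the basis $\{M_k(\cdot;t)\}_{k=0}^{\deg p}$ at the fixed time $t$, writing $p(y) = \sum_{k=0}^{n} a_k M_k(y;t)$ where the scalars $a_k$ depend on $t$ and $p$. Because $\genP_t^{\pm}$ are linear on polynomials at fixed $t$, the first part gives
\begin{equation*}
\genP_t^{\pm} p(x) = \sum_{k=0}^{n} a_k \genP_t^{\pm} M_k(\cdot;t)(x) = -\sum_{k=0}^{n} a_k \frac{\partial}{\partial t} M_k(x;t),
\end{equation*}
so the two generators agree on $p$. The main obstacle in the proof is the interchange of the limit $h\to 0^+$ with the operator $\pp_{t,t+h}$ in the right-generator step; this is legitimate precisely because $\pp_{t,t+h}$ is a polynomial-preserving operator whose action on each monomial depends polynomially (hence continuously) on $h$, while the polynomials $R_h/h$ have bounded degree $\leq n-1$ with coefficients that converge as $h\to 0^+$.
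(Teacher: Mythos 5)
Your proposal is correct and follows essentially the same route as the paper: the left generator is read off directly from the martingale identity, the right generator is handled by subtracting $M_n(\cdot;t+h)$ so that the martingale identity cancels the $M_n(x;t)$ term and the remainder is expanded coefficient-by-coefficient in powers of $y$ (using that $\int y^k P_{t,t+h}(x,dy)\to x^k$), and the general polynomial case follows by expanding in the basis $\{M_k(\cdot;t)\}$ and using linearity. The only cosmetic difference is your observation that $R_h/h$ has degree at most $n-1$, which the paper does not bother to isolate.
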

\begin{proof}
Since $M_n(y;t)$ is given by a special case of recurrence \eqref{Q-rec-G}, it is clear that $M_n(y;t)$ is a polynomial in $t$ and hence  it is differentiable with respect to $t$.

By the martingale property,  $$\int M_n(y,t)P_{s,t}(x,dy)-M_n(x;t)=M_n(x;s)-M_n(x;t)$$ for $s<t$, so
\begin{multline*}{\genP}_t^-(M_n(\cdot;t))(x)=\lim_{s\to t^-}\frac{1}{t-s}\left( \pp_{s,t}(M_n(\cdot;t))(x)-M_n(x;t)\right)\\=\lim_{s\to t^-}\frac{1}{t-s}\left( M_n(x;s)(x)-M_n(x;t)\right)=-\frac{\partial}{\partial t}M_n(x;t).
\end{multline*}

We now consider the right generator. Writing $$M_n(y;t)=\sum_{k=0}^n a_k(t)y^k$$ we have
\begin{multline*}\frac{1}{h}\left(\int M_n(y,t)P_{t,t+h}(x,dy)-M_n(x;t)\right)\\= \int \frac{M_n(y;t)-M_n(y;t+h)}{h}P_{t,t+h}(x,dy)\\
=\sum_{k=0}^n \frac{a_k(t)-a_k(t+h)}{h}\int y^k P_{t,t+h}(x,dy).
\end{multline*}
Since $$\int y^k P_{t,t+h}(x,dy)\to x^k \mbox{ as $h\to 0$ } $$and $$\frac{a_k(t)-a_k(t+h)}{h}\to -a_k'(t) \mbox{ as $h\to 0$},$$ the formula follows.

To prove the second part, we write $p(y)$ as a (finite) linear combination  $$a_0(t)M_0(y;t)+\dots+a_{n-1}(t)M_{n-1}(y;t) +a_n(t) M_{n}(y;t).$$ Then by linearity
$$\pp_{s,u}({p})(x)=\sum_{k=0}^n  a_k(t) \pp_{s,u}(M_k(\cdot;t))(x),$$  so $\tilde{\gen}^{\pm}_t({p})(x)=-\sum_{k=0}^n a_k(t) \frac{\partial}{\partial t}M_k(x;t)$.
\end{proof}
Since both left and right generators coincide on polynomials, from now on write $\genP_t ({p})$ for their common value.

Next, consider an auxiliary operator $\hh_t$ acting on a polynomial $p$ as the difference of $\genP_t$ applied to polynomial $yp(y)$ and $x \tilde{\gen}_t({p})(x)$. Informally,
\begin{equation}\label{Def-H}
\hh_t({p})(x):=\tilde \gen_t(yp(y))(x)-x\genP_t({p(y)})(x).
\end{equation}

\begin{lemma}\label{HofM}
\begin{equation}\label{HM}
\hh_t(M_k(\cdot;t))(x)=[k]_q M_{k-1}(x;t).
\end{equation}
\end{lemma}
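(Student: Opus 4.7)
The plan is to deduce \eqref{HM} directly from the three-term recurrence \eqref{Q-rec-G} specialized at $x=s=0$, which expresses $M_n(y;t)=Q_n(y|0,0,t)$ through
\begin{equation*}
yM_n(y;t) = M_{n+1}(y;t) + \theta[n]_q M_n(y;t) + \bigl(t+\tau[n-1]_q\bigr)[n]_q M_{n-1}(y;t).
\end{equation*}
This identity, viewed as an equality of polynomials in $y$ at fixed $t$, is tailor-made for pushing $\genP_t$ through: the left side is $yM_n(\cdot;t)$, which is the ingredient of $\hh_t(M_n(\cdot;t))$, and the right side is a linear combination of martingale polynomials with $y$-independent coefficients.

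First, I would apply $\genP_t$ term-by-term to the recurrence and use linearity together with Lemma~\ref{Lem-M} (so that $\genP_t(M_j(\cdot;t))(x)=-\partial_t M_j(x;t)$) to get
\begin{equation*}
\genP_t\bigl(yM_n(\cdot;t)\bigr)(x) = -\partial_t M_{n+1}(x;t) - \theta[n]_q\,\partial_t M_n(x;t) - \bigl(t+\tau[n-1]_q\bigr)[n]_q\,\partial_t M_{n-1}(x;t).
\end{equation*}
Independently, I would differentiate the same recurrence (with $y$ replaced by $x$) with respect to $t$. The only explicit $t$-dependence in the coefficients is through $t+\tau[n-1]_q$, so the result reads
\begin{equation*}
x\,\partial_t M_n(x;t) = \partial_t M_{n+1}(x;t) + \theta[n]_q\,\partial_t M_n(x;t) + [n]_q M_{n-1}(x;t) + \bigl(t+\tau[n-1]_q\bigr)[n]_q\,\partial_t M_{n-1}(x;t).
\end{equation*}
This identity solves exactly for the right-hand side of the preceding display and yields $\genP_t(yM_n(\cdot;t))(x) = -x\,\partial_t M_n(x;t) + [n]_q M_{n-1}(x;t)$. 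Subtracting $x\,\genP_t(M_n(\cdot;t))(x) = -x\,\partial_t M_n(x;t)$ then gives $\hh_t(M_n(\cdot;t))(x) = [n]_q M_{n-1}(x;t)$, as desired.

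The computation is essentially algebraic, so I do not expect a substantive obstacle; the only point requiring attention is to keep straight \emph{which} variable is being differentiated at each step. When $\genP_t$ is applied to the recurrence, the factors $\theta[n]_q$ and $(t+\tau[n-1]_q)[n]_q$ are constants (since $\genP_t$ acts on polynomials in $y$ at fixed $t$), while the parallel "$t$-derivative of the recurrence" step is a separate identity in $(x,t)$; the combination of these two viewpoints is precisely what produces the cancellation that isolates $[n]_q M_{n-1}(x;t)$.
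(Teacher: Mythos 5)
Your proposal is correct and follows essentially the same route as the paper: apply $\genP_t$ to the three-term recurrence for $yM_n(\cdot;t)$ using Lemma~\ref{Lem-M}, and compare with the $t$-derivative of that same recurrence (the paper phrases this as $-x\,\partial_t M_k(x;t)=-\partial_t(xM_k(x;t))$ followed by substitution of the recurrence, which is the same computation), so that the product-rule term from the coefficient $t+\tau[k-1]_q$ yields $[k]_q M_{k-1}(x;t)$. No substantive differences.
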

\begin{proof}
From \eqref{Q-rec-G} we get the recurrence
$$
y M_k(y;t)=M_{k+1}(y;t)+\theta [k]_q M_{k}(y;t)+(t+\tau[k-1])[k]_qM_{k-1}(y;t).
$$
Therefore, by linearity and Lemma \ref{Lem-M} we have
\begin{multline*}
\tilde \gen_t (y M_k(y;t))(x)\\=-\frac{\partial}{\partial t}M_{k+1}(x;t)-\theta [k]_q \frac{\partial}{\partial t}M_{k}(x;t)-(t+\tau[k-1])[k]_q\frac{\partial}{\partial t}M_{k-1}(x;t).
\end{multline*}

On the other hand,
\begin{multline*}x  \genP_t ( M_k(y;t))(x)=- x\frac{\partial}{\partial t}M_{k}(x;t) = -\frac{\partial}{\partial t}(xM_{k}(x;t))\\=
-\frac{\partial}{\partial t}\left(M_{k+1}(x;t)+\theta [k]_q M_{k}(x;t)+(t+\tau[k-1])[k]_qM_{k-1}(x;t)\right).
\end{multline*}
 Subtracting the two expressions yields the answer.

\end{proof}

\begin{lemma}\label{Lemma_HfmNu}
With $\nu_{x,t}(dy)$ as in Theorem \ref{Thm:gen_q_Meixner}, we have
\begin{equation}\label{Nu2H}
\hh_t({p})(x)=\int \frac{p(y)-p(x)}{y-x}\nu_{x,t}(dy).
\end{equation}
\end{lemma}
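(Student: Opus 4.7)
The plan is to verify \eqref{Nu2H} by linearity on the basis $\{M_k(\cdot;t)\}_{k\ge 0}$ of martingale polynomials. For $p=M_k(\cdot;t)$ the left-hand side equals $[k]_q M_{k-1}(x;t)$ by Lemma \ref{HofM}. For the right-hand side I would set
$$R_k(y;x,t):=\frac{M_k(y;t)-M_k(x;t)}{y-x},$$
which is a polynomial of degree $k-1$ in $y$, and expand it in the $W$-basis as $R_k(y;x,t)=\sum_{n=0}^{k-1} c_{k,n}(x,t)\,W_n(y;x,t)$. Since $\{W_n(\cdot;x,t)\}_n$ is orthogonal with respect to $\nu_{x,t}$ with $\int W_0\,\nu_{x,t}(dy)=1$ and $\int W_n\,\nu_{x,t}(dy)=0$ for $n\ge 1$, the right-hand side of \eqref{Nu2H} is exactly $c_{k,0}(x,t)$.

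The core of the proof will be to establish by induction on $k$ the explicit formula
$$c_{k,n}(x,t)=\binom{k}{n+1}_q M_{k-n-1}(x;t),\qquad 0\le n\le k-1,$$
which immediately gives $c_{k,0}=[k]_q M_{k-1}(x;t)$ and closes the argument. The inductive step rests on two derivations. First, subtracting the $M$-recurrence \eqref{Q-rec-G} (specialized to $s=0$, $x=0$) evaluated at $y$ and at $x$, dividing by $y-x$, and using $\tfrac{yM_k(x;t)-xM_k(x;t)}{y-x}=M_k(x;t)$, will yield
$$R_{k+1}(y;x,t)=(y-\theta[k]_q)R_k(y;x,t)-(t+\tau[k-1]_q)[k]_q R_{k-1}(y;x,t)+M_k(x;t).$$
Second, recurrence \eqref{NuQ} describes the action of multiplication by $y$ on the $W$-basis, so the $W$-coefficients of $yR_k$ can be read off as linear combinations of $c_{k,m-1}$, $c_{k,m}$, $c_{k,m+1}$. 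Combining the two will produce a recurrence for $c_{k+1,m}$ in terms of $c_{k,m-1}$, $c_{k,m}$, $c_{k,m+1}$, $c_{k-1,m}$, with an extra constant $M_k(x;t)$ contributing only when $m=0$.

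The main obstacle will be the combinatorial verification that the conjectured formula satisfies this recurrence. The check splits into matching the coefficients of $xM_{k-m-1}$, $\theta M_{k-m-1}$, and $M_{k-m-2}$ on the two sides; each match follows from the $q$-Pascal identity $\binom{k+1}{m+1}_q=\binom{k}{m}_q+q^{m+1}\binom{k}{m+1}_q$ and the elementary identity $[m+1]_q-[k]_q=-q^{m+1}[k-m-1]_q$. The case $m=0$ needs a little extra care: the stray $M_k(x;t)$ is absorbed by invoking the $M$-recurrence itself, namely $M_k(x;t)=(x-\theta[k-1]_q)M_{k-1}(x;t)-(t+\tau[k-2]_q)[k-1]_q M_{k-2}(x;t)$, at which point the inductive identity closes. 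All three parameters $\theta,\tau,q$ enter nontrivially, so this bookkeeping is the laborious part; no essentially new idea beyond the two recurrences and $q$-Pascal is required.
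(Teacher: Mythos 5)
Your plan is correct, and the computations it defers do close exactly as you describe; but it reaches \eqref{Nu2H} by a genuinely different route from the paper. Both arguments reduce the lemma, via Lemma \ref{HofM}, to the single family of identities $\int\frac{M_n(y;t)-M_n(x;t)}{y-x}\,\nu_{x,t}(dy)=[n]_q M_{n-1}(x;t)$ and then induct, but the basis expansions run in opposite directions. The paper imports the connection identities \eqref{QQQ}--\eqref{QM} from \cite[Lemma 3.1]{Bryc-Wesolowski-03}, which write $(y-x)W_n(y;x,t)=Q_{n+1}(y|x,t,t)$ as a $q$-binomial combination of the divided differences $\frac{M_k(y;t)-M_k(x;t)}{y-x}$; integrating against $\nu_{x,t}$ and using that $W_n$ is orthogonal to $W_0$ for $n\ge 1$ then isolates the top term by induction on $n$. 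You instead derive the \emph{inverse} connection formula, expanding each divided difference $R_k$ in the $W$-basis with the explicit coefficients $c_{k,n}=\left[\begin{array}{c}k\\ n+1\end{array}\right]_q M_{k-n-1}(x;t)$, obtained directly from the two three-term recurrences \eqref{Q-rec-G} and \eqref{NuQ}. The coefficient matching you outline does go through: for $m\ge 1$ the $t$- and $\tau$-parts cancel because $(1-q)[m+1]_q=1-q^{m+1}$ and $[k-1]_q-[m+1]_q=q^{m+1}[k-m-2]_q$, and at $m=0$ the stray $M_k(x;t)$ combines with the $q[k]_qM_k(x;t)$ produced by the $M$-recurrence to give $[k+1]_qM_k(x;t)$, as needed. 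What your route buys is self-containment (no appeal to the earlier paper's connection lemma) and a stronger conclusion (the full $W$-expansion of $R_k$, of which the desired integral is just the coefficient $c_{k,0}$); what it costs is the heavier, if routine, $q$-bookkeeping that the paper sidesteps by quoting the ready-made identities \eqref{QQQ} and \eqref{QM}.
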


\begin{proof} In view of \eqref{HM}, we only need to show that  for all $n=0,1,\dots$ we have
\begin{equation}\label{Nu_int_M}
\int \frac{M_n(y;t)-M_n(x;t)}{y-x}\nu_{x,t}(dy)=[n]_q M_{n-1}(x;t).
\end{equation}
We prove \eqref{Nu_int_M} by induction. Since $M_0=1$ and $M_1(y;t)=y$, it is clear that \eqref{Nu_int_M} holds for $n=0,1$.

Suppose now that \eqref{Nu_int_M} holds for  some $n\geq 1$ and for all the previous  integers.

The induction step relies on the following algebraic identities from \cite[Lemma 3.1]{Bryc-Wesolowski-03}
\begin{equation}\label{QQQ}
0=Q_{n}(x|x,t,t)=\sum_{k=0}^n\,\left[\begin{array}{c} n \\ k \end{array}\right]_q\,Q_{n-k}(0|x,t,0)M_k(x,t)
\end{equation} and
\begin{equation}\label{QM}
Q_{n+1}(y|x,t,t)=\sum_{k=1}^{n+1}\,\left[\begin{array}{c} n+1 \\ k \end{array}\right]_q\,Q_{n+1-k}(0|x,t,0)(M_k(y;t)-M_k(x;t)).
\end{equation}
Here we use the $q$-notation $[n]_q!=\prod_{k=1}^n [k]_q$ with $[0]_q!=1$ and
$$\left[\begin{array}{c} n\\ k \end{array}\right]_q=\frac{[n]_q!}{[k]_q![n-k]_q!}.$$ (The latter is well defined as we only consider $q>-1$.)

From \eqref{Q-rec-G} applied to $s=t$ we see that for $n\geq 0$ we have $$Q_{n+1}(y|x,t,t)=(y-x)W_{n}(y;x,t),$$ where polynomials $\{W_n(y;x,t)\}$ satisfy recurrence \eqref{NuQ}.
 Identity \eqref{QM} gives
$$
W_{n}(y;x,t) =\sum_{k=1}^{n}\, \,\left[\begin{array}{c} n+1 \\ k \end{array}\right]_q\,Q_{n+1-k}(0|x,t,0)\,\tfrac{M_k(y;t)-M_k(x;t)}{y-x}.
$$

Since  $n\geq 1$, polynomial $W_n$ is orthogonal to $W_0=1$. Integrating the above equality with respect to the measure $\nu_{x,t}(dy)$
 we get
\begin{equation}\label{QMnu}
0=\sum_{k=1}^{n+1}\, \,\left[\begin{array}{c} n+1 \\ k \end{array}\right]_q\,Q_{n+1-k}(0|x,t,0)\,\int\,\tfrac{M_k(y,t)-M_k(x,t)}{y-x}\,\nu_{x,t}(dy).\end{equation}

We now observe that  identity \eqref{QQQ} gives
\begin{multline*}
\sum_{k=1}^{n+1}\, \,\left[\begin{array}{c} n+1 \\ k \end{array}\right]_q\,Q_{n+1-k}(0|x,t,0)[k]_q M_{k-1}(x,t)
\\=[n+1]_q\sum_{k=0}^n \,\left[\begin{array}{c} n \\ k \end{array}\right]_q\,Q_{n-k}(0|x,t,0) M_k(x,t)=0.
\end{multline*}
Subtracting the left hand side of the above  from \eqref{QMnu} we see that all but the last terms cancel by  the inductive assumption  \eqref{Nu_int_M}. The remaining term is
$$\left[\begin{array}{c} n+1 \\ n+1 \end{array}\right]_q Q_{0}(0|x,s,0)\left(\int\,\tfrac{M_{n+1}(y,t)-M_{n+1}(x,t)}{y-x}\,\nu_{x,t}(dy) - [n+1]_q M_{n}(x,t)\right)=0.$$
Since  $Q_{0}(0|x,s,0)=1$ and
$q\ne -1$, this proves that \eqref{Nu_int_M} holds for $n+1$, hence for all $n$, and \eqref{Nu2H} follows.
\end{proof}

\begin{lemma}\label{H2A}
If $\hh_t$, as defined on polynomials by \eqref{Def-H}, is given by an integral  \eqref{Nu2H}, then $\genP_t$ acts on polynomials by \eqref{gen-q+}.
\end{lemma}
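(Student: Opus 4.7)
My plan is to reduce the computation of $\genP_t(f)(x)$ for a polynomial $f$ to an application of the auxiliary operator $\hh_t$, and then invoke the hypothesis \eqref{Nu2H}. For fixed $x\in\RR$, I will introduce the polynomial in $y$
\begin{equation*}
g_x(y) := \frac{f(y)-f(x)}{y-x},
\end{equation*}
so that $f(y) = f(x) + y\,g_x(y) - x\,g_x(y)$. Applying the linear operator $\genP_t$ to this identity, and using that $\genP_t(1) = 0$ (from Lemma \ref{Lem-M} with $n=0$, since $M_0\equiv 1$), I expect to obtain
\begin{equation*}
\genP_t(f)(x) \;=\; \genP_t\bigl(y\mapsto y\,g_x(y)\bigr)(x) \;-\; x\,\genP_t(g_x)(x) \;=\; \hh_t(g_x)(x),
\end{equation*}
the last equality being the very definition \eqref{Def-H} of $\hh_t$ applied to the polynomial $p=g_x$.

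Next I would apply the assumed integral representation \eqref{Nu2H} to rewrite
\begin{equation*}
\hh_t(g_x)(x) \;=\; \int \frac{g_x(y)-g_x(x)}{y-x}\,\nu_{x,t}(dy),
\end{equation*}
noting that $g_x(x) = \lim_{y\to x} \tfrac{f(y)-f(x)}{y-x} = f'(x)$. A short calculus computation then gives the algebraic identity
\begin{equation*}
\frac{g_x(y)-g_x(x)}{y-x} \;=\; \frac{1}{y-x}\left[\frac{f(y)-f(x)}{y-x} - f'(x)\right] \;=\; \frac{\partial}{\partial x}\frac{f(y)-f(x)}{y-x},
\end{equation*}
which is exactly the integrand in \eqref{gen-q+}. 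Combining these steps yields the desired formula.

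The only delicate point I anticipate is bookkeeping the parameter dependence of $g_x$: it plays the role both of the polynomial being transformed by $\genP_t$ and of the evaluation point of the resulting polynomial. However, since the hypothesis \eqref{Nu2H} provides an identity valid for \emph{every} polynomial $p$ (in $y$) and \emph{every} evaluation point, I can apply it to the polynomial $y\mapsto g_{x_0}(y)$ evaluated at $z=x_0$ for each fixed $x_0$, with $\nu_{x_0,t}$ as the reference measure. Thus there is no genuine obstacle---once the decomposition above is in place, the lemma is essentially an algebraic rearrangement, and all the analytic work has already been carried out in Lemmas \ref{Lem-M}--\ref{Lemma_HfmNu}.
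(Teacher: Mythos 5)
Your proof is correct, but it follows a genuinely different route from the paper's. The paper proves \eqref{gen-q+} by induction on the monomials $f(x)=x^n$: starting from $\genP_t(x^{n+1})=\hh_t(x^n)+x\genP_t(x^n)$, it substitutes the inductive hypothesis and the integral form of $\hh_t$, and rearranges the integrands using \eqref{Diff} to arrive at the formula for $x^{n+1}$. You instead avoid induction entirely by factoring $f(y)-f(x)=(y-x)g_x(y)$ with $g_x(y)=\tfrac{f(y)-f(x)}{y-x}$ a polynomial in $y$, observing that $\genP_t(f)(x)=\hh_t(g_x)(x)$ directly from \eqref{Def-H} and $\genP_t(1)=0$, and then computing $\tfrac{g_x(y)-g_x(x)}{y-x}=\tfrac{f(y)-f(x)}{(y-x)^2}-\tfrac{f'(x)}{y-x}$, which is the integrand of \eqref{gen-q+} by \eqref{Diff}. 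Your decomposition is shorter and exposes the lemma as a single algebraic identity rather than a term-by-term verification; its one delicate point --- that $x$ serves simultaneously as the parameter defining the polynomial $g_x$ and as the evaluation point --- is exactly the point you flag, and it is handled correctly since both \eqref{Def-H} and \eqref{Nu2H} are identities valid for every polynomial $p$ and every evaluation point. The paper's induction trades this subtlety for a longer but more mechanical computation. Both arguments rest on the same two ingredients: linearity of $\genP_t$ and the differentiation identity \eqref{Diff}.
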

\begin{proof}
 By linearity it is enough to prove  \eqref{gen-q+}  for $f(x)=x^n$. We proceed by induction. Since $\genP_t(1)=0$, the formula holds true for $n=0$.
Since
\begin{equation}\label{Diff}
\frac{\partial}{\partial x}\frac{f(y)-f(x)}{y-x}=\frac{f(y)-f(x)}{(y-x)^2}-\frac{ f'(x)}{y-x},
\end{equation}
assuming \eqref{gen-q+} holds for $x^n$, we have
\begin{multline}
 \tilde \gen_t(x^{n+1})=\hh_t(x^n)+x\genP_t(x^n) \\
 =\int \frac{y^n-x^n}{y-x} \nu_{x,t}(dy)+ \int  \left(x\frac{y^n-x^n}{(y-x)^2} - \frac{ n x^{n}}{y-x}\right)\nu_{x,t}(dy)
\\=\int \left( \frac{y^n(y-x)}{(y-x)^2} + x\frac{y^n-x^n}{(y-x)^2} - \frac{ (n+1) x^{n}}{y-x}\right)\nu_{x,t}(dy)\\
= \int \left( \frac{y^{n+1}-x^{n+1}}{(y-x)^2} - \frac{ (n+1) x^{n}}{y-x}\right)\nu_{x,t}(dy)\\
= \int \left(\frac{\partial}{\partial x} \frac{y^{n+1}-x^{n+1}}{y-x}  \right)\nu_{x,t}(dy).
\end{multline}
\end{proof}
\begin{proof}[Proof of Theorem \ref{Thm:gen_q_Meixner}(ii)]
By Lemma \ref{Lem-M} both left and right generators coincide on polynomials.
 The integral representation  follows by combining Lemma \ref{Lemma_HfmNu} with Lemma \ref{H2A}.
\end{proof}

\section{Proof of Theorem \ref{Lemma-Main} and  Theorem \ref{Thm:gen_q_Meixner}(i)}\label{Sect:proofII}

We will deduce Theorem \ref{Lemma-Main} from Theorem \ref{Thm:gen_q_Meixner}(ii), which we have already proved.  Consider operator $\hh_t$   defined in \eqref{Def-H}, and let $\cc_t$ be defined by the similar expression: $$\cc_t({p})(x)=\hh_t(y p(y))(x)-x\hh_t(p(y))(x).$$
\begin{lemma}\label{Lem-C}
If $\hh_t$ is given by  \eqref{Nu2H}, then $\cc_t({p})(x)=\int p(y)\nu_{x,t}(dy)$.
\end{lemma}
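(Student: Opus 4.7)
The plan is a one-line algebraic simplification: substitute the integral formula \eqref{Nu2H} for $\hh_t$ into the definition of $\cc_t$ and observe that a factor of $y-x$ cancels between numerator and denominator.

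Specifically, I would first apply \eqref{Nu2H} to the polynomial $q(y) := y p(y)$, obtaining
\[
\hh_t(y p(y))(x) \;=\; \int \frac{y\, p(y) - x\, p(x)}{y-x}\,\nu_{x,t}(dy),
\]
and separately
\[
x\,\hh_t(p)(x) \;=\; \int \frac{x\, p(y) - x\, p(x)}{y-x}\,\nu_{x,t}(dy).
\]
Subtracting, the terms $x p(x)$ cancel, and the numerator collapses to $y\, p(y) - x\, p(y) = (y-x)\, p(y)$. The $y-x$ factor then cancels against the denominator, leaving the integrand equal to $p(y)$, which is exactly the stated conclusion $\cc_t(p)(x) = \int p(y)\,\nu_{x,t}(dy)$.

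The only point worth checking is that both integrals above are finite individually, so that subtraction under the integral sign is legitimate. This is immediate: for any polynomial $r$ the divided difference $(r(y)-r(x))/(y-x)$ is itself a polynomial in $y$, and $\nu_{x,t}$ is compactly supported (because the coefficients of the recurrence \eqref{NuQ} are bounded for $-1<q<1$, as noted just after that recurrence), so every polynomial is $\nu_{x,t}$-integrable.

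There is no serious obstacle here; the lemma is essentially a bookkeeping identity that repackages $\cc_t$ into a form from which one can read off its action. Its usefulness will emerge in the next step, where combining this identity with Lemma \ref{HofM} and the martingale structure should identify $\int p(y)\,\nu_{x,t}(dy)$ with the limit of $\frac{1}{t-s}\int (y-x)^2 p(y)\,P_{s,t}(x,dy)$, thereby yielding the weak convergence asserted in Theorem \ref{Lemma-Main}.
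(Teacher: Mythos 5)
Your proposal is correct and follows exactly the paper's argument: substitute \eqref{Nu2H} into the definition of $\cc_t$, cancel the $xp(x)$ terms, and collapse the numerator to $(y-x)p(y)$; the paper likewise notes that $(p(y)-p(x))/(y-x)$ is a polynomial in $y$ to justify the manipulation. No differences worth remarking.
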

\begin{proof} Recall that if $p$ is a polynomials then $(p(y)-p(x))/(y-x)$ is a polynomial in variable $y$.
By simple algebraic manipulations we get
\begin{multline*}\hh_t(y p(y))(x)-x\hh_t(p(y))(x)=\\
\int \frac{yp(y)-xp(x)}{y-x}\nu_{x,t}(dy)-x \frac{p(y)-p(x)}{y-x}\nu_{x,t}(dy)=\int \frac{(y-x)p(y)}{y-x}\nu_{x,t}(dy).
\end{multline*}
\end{proof}

\begin{proof}[Proof of Theorem \ref{Lemma-Main}]
We give the proof for the first part only.  By Theorem \ref{Thm:gen_q_Meixner}(ii),  for a polynomial $f$ we have
\begin{multline*}
\cc_t(f)(x)=\hh_t(y f(y))(x)-x\hh_t(f(y))(x)=\\  \genP_t(y^2f(y))(x)-2x  \genP_t (yf(y))(x)+x^2  \genP f(y)(x)\\=\lim_{h\to 0^+}\int \frac{y^2 f(y)-x^2f(x)}{h}P_{t,t+h}(x,dy)-2x
\lim_{h\to 0^+}\int \frac{y f(y)-xf(x)}{h}P_{t,t+h}(x,dy)\\+x^2\lim_{h\to 0^+}\int \frac{ f(y)-f(x)}{h}P_{t,t+h}(x,dy)=
\lim_{h\to 0^+}\int \frac{(y-x)^2f(y)}{h}P_{t,t+h}(x,dy).
\end{multline*}

In view of Lemma \ref{Lem-C}, this shows that as $h\to 0$, all moments of probability measure $$\frac{(y-x)^2}{h}P_{t,t+h}(x,dy)$$ converge to the moments of measure $\nu_{x,t}(dy)$.  Probability measure $\nu_{x,t}(dy)$ is compactly supported (recall that $|q|<1$), so  it is uniquely determined by moments, and weak convergence follows.

\end{proof}
\begin{proof}[Proof of Theorem \ref{Thm:gen_q_Meixner}(i)]
If $f$ is bounded and has bounded and continuous second derivative then for a fixed $x$
$$
\varphi(y)=\begin{cases}
\frac{\partial}{\partial x}\frac{f(y)-f(x)}{y-x} & \mbox{ if $y\ne x$}\\
\frac{1}{2}f''(x) & \mbox{if $y=x$}
\end{cases}$$
is a bounded continuous function.
Indeed, by Taylor's theorem
$$\left|\frac{\partial}{\partial x}\frac{f(y)-f(x)}{y-x}\right|=\frac{1}{(y-x)^2}\left|\int_x^y f''(z)(z-x)dz\right|\leq \frac12\sup_{z\in\RR}|f''(z)|.$$

Next, we observe  that since $\int y P_{s,t}(x,dy)=x$, from \eqref{Diff}  we get %
\begin{multline*}
\frac{1}{t-s}\int \left(f(y)-f(x)\right)P_{s,t}(x,dy)=\frac{1}{t-s}\int_{\RR-\{x\}} \left(f(y)-f(x)\right)P_{s,t}(x,dy) \\=\int_{\RR-\{x\}}\left( \frac{\partial}{\partial x}\frac{f(y)-f(x)}{y-x}\right)\frac{(y-x)^2}{t-s}P_{s,t}(x,dy)=\int_\RR  \varphi(y)\frac{(y-x)^2}{t-s}P_{s,t}(x,dy).
\end{multline*}

Therefore, by Lemma \ref{Lemma-Main},
\begin{multline*}
\lim_{s\to t^-} \frac{1}{t-s}\int_\RR \left(f(y)-f(x)\right)P_{s,t}(x,dy)=\lim_{s\to t^-}  \int_\RR \varphi(y)\frac{(y-x)^2}{t-s}P_{s,t}(x,dy)\\=\int_\RR \varphi(y) \nu_{x,t} (dy)=
\frac{1}{2}f''(x)\nu_{t,x}(\{x\})+\int_{\RR\setminus\{x\}}\left( \frac{\partial}{\partial x}\frac{f(y)-f(x)}{y-x}\right)\nu_{x,t}(dy).
\end{multline*}
Note that by Proposition \ref{Prop-properties}(iii),
$$
\sup_{0\leq s<t}\sup_{x\in\RR}\left| \int_\RR  \varphi(y)\frac{(y-x)^2}{t-s}P_{s,t}(x,dy)\right|\leq \|\varphi\|_\infty,
$$
so $f$ is indeed in the domain of the weak generator.
This proves \eqref{gen-q} for the left generator. Similar argument applies to the right generator.
\end{proof}

 \subsection*{Acknowledgement}
We would like to thank W. M\l otkowski  for a helpful discussion that lead to   \eqref{NuQ}. {Comments by the referee helped us to improve the presentation.}
WB research was partially supported by NSF grant \#DMS-0904720 and by the Taft Research Center. JW research was partially supported by
NCN grant 2012/05/B/ST1/00554.

\def\cprime{$'$}

\end{document}